\documentclass[a4paper,
reqno]{amsart}
\usepackage{amsmath}
\usepackage{cases}
\usepackage{amsfonts}
\usepackage[colorlinks,linkcolor=blue,citecolor=blue]{hyperref}
\usepackage{latexsym, amssymb, amsmath, amsthm, bbm}
\usepackage[all]{xy}
\usepackage{pgfplots}
\usepackage{enumerate}
\usepackage{tikz-cd}

\DeclareSymbolFont{EulerExtension}{U}{euex}{m}{n}
\DeclareMathSymbol{\euintop}{\mathop} {EulerExtension}{"52}
\DeclareMathSymbol{\euointop}{\mathop} {EulerExtension}{"48}

\allowdisplaybreaks[4]

\setlength{\textwidth}{5.6truein}
\setlength{\textheight}{8.2truein}
\setlength{\topmargin}{-0.13truein}
\setlength{\leftmargin}{1truein}
\addtolength{\parskip}{5pt}

\def \id{\operatorname{id}}

\def \C{\mathcal{C}}

\def \k{\mathbbm{k}}

\def \dim{\operatorname{dim}}
\def \FPdim{\operatorname{FPdim}}
\def \End{\operatorname{End}}
\def \rad{\operatorname{rad}}

\def \Hom{\operatorname{Hom}}

\def \Im{\operatorname{Im}}

\def \C{\mathcal{C}}

\def \1{\mathbbm{1}}

\def \End{\operatorname{End}}

\def \mod{\mathsf{mod}}
\def \-{\text{-}}
\def \Coker{\operatorname{Coker}}
\numberwithin{equation}{section}

\newtheorem{theorem}{Theorem}[section]
\newtheorem{lemma}[theorem]{Lemma}
\newtheorem{proposition}[theorem]{Proposition}
\newtheorem{corollary}[theorem]{Corollary}
\newtheorem{definition}[theorem]{Definition}

\newtheorem{remark}[theorem]{Remark}

\newtheorem{notation}[theorem]{Notation}%[section]

\begin{document}
\title[On the stable equivalences between finite tensor categories]{On the stable equivalences between finite tensor categories}
\thanks{The first author was supported by Postgraduate Research
and Practice Innovation Program of Jiangsu Province Grant KYCX22\_0081. The second author was supported by National
Natural Science Foundation of China (NSFC) grant 12171230}

\author[Y. Xu and G. Liu]{Yuying Xu and  Gongxiang Liu}
\address{Department of Mathematics, Nanjing University, Nanjing 210093, China}
\email{yuyingxu@smail.nju.edu.cn}
\email{gxliu@nju.edu.cn}

\date{}

\begin{abstract}
We aim to study Morita theory for tensor triangulated categories. For two finite tensor categories having no projective simple objects, we prove that their stable equivalence induced by an exact $\k\-$linear monoidal functor can be lifted to a tensor equivalence under some certain conditions.
\end{abstract}

\subjclass[2020]{Primary 16T05, 18G65; Secondary 16G10, 18M05}
\keywords{Hopf algebra, Stable categories, Tensor categories, Frobenius-Perron dimensions.}
\maketitle

\section{Introduction}
Our aim is to study tensor triangulated categories from a Hopf algebraic perspective. In recent years, there has been tremendous interest in developing tensor triangulated categories (see \cite{Bal05, Bal10, NVY19, NVY22} and references therein).  However, so far, limited work has been done in Hopf algebraic fields.

Since all finite tensor categories are Frobenius categories \cite{EO04}, it follows that their stable categories are in fact tensor triangulated categories. In particular, take two finite dimensional Hopf algebras $H$ and $H'$ and consider their representation categories $H\-\mod$ and $H'\-\mod$. A natural question is: If their stable categories are equivalent as tensor triangulated categories, then what can we say about the relations between $H$ and $H'$?

An important relation in Hopf algebras is gauge equivalence. Ng and Schauenburg showed in \cite{NS08} that $H$ and $H'$ are gauge equivalent if and only if $H\-\mod$ and $H'\-\mod$ are $\k\-$linear monoidally equivalent. Nevertheless, it should be pointed out that even if the stable categories of two finite-dimensional algebras are equivalent, the corresponding algebraic structures may be quite different. That is, Morita theory does not work. For example, it is easy to check that some block algebras are not Morita equivalent, although they are stably equivalent \cite{Bro94}.

Our main results states that, under some mild conditions, for Hopf algebras if their stable equivalence is induced from an exact $\k\-$linear monoidal functor then they are gauge equivalent. We describe this observation in the categorical language as follows:
\begin{proposition}\label{result1}
Let $\C$ and $\C'$ be two non-semisimple finite tensor categories. Suppose $F:\C\rightarrow \C'$ is an exact $\k\-$linear monoidal functor inducing a stable equivalence $\underline{F}:\underline{\C}\rightarrow \underline{\C'}.$ If all simple objects in $\C$ and $\C'$ are invertible, then $F$ is a tensor equivalence.
\end{proposition}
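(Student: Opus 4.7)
The plan is to leverage the pointed hypothesis together with the assumed stable equivalence $\underline{F}$ to pin $F$ down on projective covers of simples, and then to propagate this to fully faithfulness and essential surjectivity.

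I would first observe that, as a $\k$-linear monoidal functor, $F$ sends invertible objects to invertible objects and hence (by the pointed hypothesis) simples to simples, yielding a group homomorphism $G \to G'$ between the groups of invertibles of $\C$ and $\C'$. Since $\C$ is non-semisimple with invertible simples, no simple can be projective (otherwise tensoring with invertibles would make every simple projective, contradicting non-semisimplicity), and similarly in $\C'$. Any projective $P$ is isomorphic to $0$ in the stable category, so $\underline{F}$ being an equivalence sends $F(P)$ to $0$ in $\underline{\C'}$, forcing $F(P)$ to be projective in $\C'$; thus $F$ preserves projectives. Moreover, since simples are non-projective indecomposables, the equivalence $\underline{F}$ makes $G \to G'$ injective.

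The central step is the identification $F(P_\C(\1)) \cong P_{\C'}(\1)$. By Krull--Schmidt write $F(P_\C(\1)) = \bigoplus_{g'\in G'} P_{\C'}(g')^{m_{g'}}$ and set $u = \sum_{g'} m_{g'}\, g' \in \Z[G']$. The commuting square formed by the projections $K_0(\C) \twoheadrightarrow K_0(\underline{\C})$ and $K_0(\C') \twoheadrightarrow K_0(\underline{\C'})$ (with $F_\ast$ and $\underline{F}_\ast$ vertical) together with the ring isomorphism $\underline{F}_\ast$ forces the principal ideals $\Z[G'] \cdot u \cdot [P_{\C'}(\1)]$ and $\Z[G'] \cdot [P_{\C'}(\1)]$ to coincide. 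Applying the augmentation map of $\Z[G']$ (which agrees with $\FPdim$ on the pointed Grothendieck ring) and using $\FPdim(P_{\C'}(\1)) \geq 2$ forces the augmentation of $u$ to equal $\pm 1$; non-negativity of the $m_{g'}$ together with $F(P_\C(\1)) \neq 0$ yields $\sum m_{g'} = 1$. Hence $F(P_\C(\1)) \cong P_{\C'}(g_0')$ for a unique $g_0'$, and the surjection onto $\1 = F(\1)$ pins $g_0' = \1$. A parallel Grothendieck-ring rank count forces $G \to G'$ to be onto, so $F(G) = G'$; tensoring by invertibles yields $F(P_\C(g)) \cong P_{\C'}(F(g))$ for every $g$.

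From here fully faithfulness follows by a standard argument: for any simple $g$ and any $Y$ one has $\dim\Hom(P_\C(g), Y) = [Y:g] = [F(Y):F(g)] = \dim\Hom(F(P_\C(g)), F(Y))$, and since $F$ is faithful (automatic for exact monoidal functors between tensor categories), the induced map is an isomorphism when $X$ is projective; applying $\Hom(-, Y)$ to a projective presentation of an arbitrary $X$ and invoking left exactness with the five-lemma extends this to all $X, Y$. Essential surjectivity then follows: projective indecomposables of $\C'$ are $F(P_\C(g))$ by the previous paragraph, and for a non-projective indecomposable $Y \in \C'$ a lift $X \in \C$ with $\underline{F}(X) \cong Y$ satisfies $F(X) \cong Y \oplus F(Q)$ for some projective $Q$, whereupon full faithfulness promotes the splitting $F(Q) \hookrightarrow F(X)$ to $Q \hookrightarrow X$, producing a complementary summand $X'$ with $F(X') \cong Y$. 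I expect the main obstacle to be the identification $F(P_\C(\1)) \cong P_{\C'}(\1)$: without the pointed hypothesis $K_0(\C)$ is no longer a group algebra and the augmentation/positivity argument for principal ideals loses traction, which is precisely where invertibility of simples is decisive.
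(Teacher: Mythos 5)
Your opening moves are sound and agree in substance with the paper: $F$ sends simples to simples (the paper gets this from the length inequality $\mathrm{length}(X)\,\mathrm{length}(Y)\le\mathrm{length}(X\otimes Y)$ applied to $F(L^*)\otimes F(L)\cong\mathbf{1}$, you from preservation of invertibility --- both fine), and neither $\C$ nor $\C'$ has a projective simple (the paper's Lemma \ref{pro}). But at exactly that point the paper is finished: it realizes $\C\cong A\-\mod$, $\C'\cong A'\-\mod$ with $A,A'$ self-injective (Lemma \ref{1}) and invokes Linckelmann's lifting theorem (Lemma \ref{lem:quiv}), which states precisely that an exact functor between self-injective algebras without projective simples, inducing a stable equivalence and preserving simples, is an equivalence. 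You instead set out to reprove that lifting statement by hand via Grothendieck rings, and that is where the argument breaks.

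The unjustified step is the claim that the commuting square of Grothendieck groups ``forces the principal ideals $\Z[G']\cdot u\cdot[P_{\C'}(\mathbf{1})]$ and $\Z[G']\cdot[P_{\C'}(\mathbf{1})]$ to coincide.'' What the square actually provides is a morphism of short exact sequences $0\to I\to K_0(\C)\to K_0(\underline{\C})\to 0$ and $0\to I'\to K_0(\C')\to K_0(\underline{\C'})\to 0$ (with $I=\Z[G]\,[P_\C(\mathbf{1})]$, $I'=\Z[G']\,[P_{\C'}(\mathbf{1})]$) whose right-hand vertical arrow is an isomorphism; the snake lemma then gives $\Coker(F_*|_{I})\cong\Coker(F_*)$, so the image of $I$ fills up $I'$ if and only if $F_*\colon\Z[G]\to\Z[G']$ is surjective, i.e.\ if and only if $G\to G'$ is already known to be onto. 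That surjectivity is exactly what you defer to ``a parallel Grothendieck-ring rank count,'' and no such count is available: stable Grothendieck groups of pointed finite tensor categories can be pure torsion (for $\k[\Z/p]$ in characteristic $p$ one gets $\Z/p$), so ranks do not detect $|G'|$. Until $F(G)=G'$ is established, the image of $I$ is only $\Z[F(G)]\cdot u\cdot[P_{\C'}(\mathbf{1})]$, the augmentation argument does not start, and the identification $F(P_\C(\mathbf{1}))\cong P_{\C'}(\mathbf{1})$ --- on which your full-faithfulness and essential-surjectivity steps, otherwise reasonable, entirely rest --- is not proved. The efficient repair is to quote Linckelmann's theorem as the paper does, rather than to reconstruct it.
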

\begin{theorem}\label{result2}
Let $\C$ and $\C'$ be two non-semisimple finite tensor categories having no projective simple objects such that $\FPdim(\C)=\FPdim(\C')$. Suppose $F:\C\rightarrow \C'$ is an exact $\k\-$linear monoidal functor inducing a stable equivalence $\underline{F}:\underline{\C}\rightarrow \underline{\C'},$ then $F$ is a tensor equivalence.
\end{theorem}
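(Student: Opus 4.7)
The plan is to establish that $F$ is faithful and surjective as a tensor functor, and then to upgrade this to a tensor equivalence via the standard multiplicativity formula for Frobenius--Perron dimensions under surjective tensor functors. Faithfulness is automatic: $\Ker F$ is a Serre tensor ideal of $\C$, but since $F$ is monoidal we have $F(\1) \cong \1 \neq 0$, and $\C$ (being a tensor category with simple unit) has no proper nonzero tensor ideals. Concretely, if $F(X) = 0$ for some $X \neq 0$, applying the exact functor $F$ to the coevaluation $\1 \hookrightarrow X \otimes X^*$ (which is nonzero by the zigzag identities and hence injective since $\1$ is simple) would give a nonzero injection into $0$, a contradiction. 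Exactness then promotes object-faithfulness to morphism-faithfulness.

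The core step is proving surjectivity using the stable equivalence. Let $S' \in \C'$ be any simple object. By hypothesis no simple object of $\C'$ is projective, so $S'$ is a nonzero indecomposable of $\underline{\C'}$. Applying the inverse of $\underline{F}$ produces some $X \in \C$ with $\underline{F}(X) \cong S'$ in $\underline{\C'}$. Translating back to $\C'$: two objects are isomorphic in the stable category exactly when their non-projective indecomposable summands (in the Krull--Schmidt decomposition) agree, so there must exist a decomposition $F(X) \cong S' \oplus P$ in $\C'$ with $P$ projective. In particular, every simple of $\C'$ is a direct summand of some $F(X)$. Since the Serre tensor subcategory of $\C'$ generated by the image of $F$ is closed under subquotients, extensions, tensor products and duals, and now contains every simple of $\C'$, it must equal $\C'$; this is exactly surjectivity of $F$.

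With $F: \C \to \C'$ now surjective, exact, $\k$-linear, and monoidal, I would finally appeal to the standard theory of surjective tensor functors between finite tensor categories: $F$ admits an exact right adjoint $F^R$, the object $A := F^R(\1)$ carries an algebra structure in $\C$, and the formula $\FPdim(\C) = \FPdim(\C') \cdot \FPdim(A)$ holds; moreover $F$ is a tensor equivalence if and only if $A \cong \1$. The hypothesis $\FPdim(\C) = \FPdim(\C')$ forces $\FPdim(A) = 1$, and the unique algebra of Frobenius--Perron dimension $1$ in $\C$ is $\1$, whence $F$ is a tensor equivalence. The essential subtlety lies in the surjectivity step: the absence of projective simples is used crucially to guarantee that simples of $\C'$ remain visible in $\underline{\C'}$ and can therefore be lifted through the stable equivalence to obtain preimages in $\C$ up to projective summands; without this hypothesis there is no direct mechanism to extract a concrete object-level relation $F(X) \cong S' \oplus P$ from an abstract stable isomorphism.
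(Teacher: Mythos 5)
Your proposal is correct in outline but takes a genuinely different route from the paper. The paper never isolates surjectivity of $F$: it builds the correspondence $\Phi$ between indecomposable non-projective objects coming from the stable equivalence (Lemma \ref{ind}), shows that simple quotients of $\Phi(L)$ are detected by stable Hom-spaces (Lemma \ref{simple}, Corollaries \ref{iff} and \ref{important}), and then runs a chain of Frobenius--Perron inequalities from $\FPdim(\C)$ down to $\FPdim(\C')$ which the hypothesis $\FPdim(\C)=\FPdim(\C')$ collapses to equalities, forcing each $F(L_i)$ to be simple; it concludes with Linckelmann's lifting theorem (Lemma \ref{lem:quiv}). You instead use only the essential surjectivity of $\underline{F}$ to produce, for each simple $S'$ of $\C'$, an object $X$ with $F(X)\cong S'\oplus P$, and then outsource the rest to the theory of surjective tensor functors in \cite{EGNO15}: a surjective tensor functor between finite tensor categories of equal Frobenius--Perron dimension is an equivalence. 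Your route is shorter, uses the stable equivalence much more weakly (no Hom-space information at all), and only needs the absence of projective simples in $\C'$, not in $\C$ --- so, carried out in full, it would slightly strengthen the theorem. The trade-off is reliance on the nontrivial machinery of surjective functors and their adjoints, where the paper is elementary modulo Linckelmann.

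One step needs repair. ``Surjective'' in the sense required by the results you invoke means that every object of $\C'$ is a subquotient of some $F(X)$; this is a priori stronger than your assertion that the Serre subcategory generated by the image is all of $\C'$, since the subquotient-closure of the image is a tensor subcategory but need not be closed under extensions. The gap is bridgeable: because $F$ preserves projectives (implicit in the hypothesis that $F$ induces $\underline{F}$), the projective object $F(P_{\C}(\mathbf{1}))$ surjects onto $F(\mathbf{1})\cong\mathbf{1}$ and hence contains the projective cover $P(\mathbf{1})$ of the unit of $\C'$ as a direct summand; each indecomposable projective $P(L'_j)$ is then a direct summand of $P(\mathbf{1})\otimes L'_j$ (projective by Lemma \ref{2}), which is a subquotient of $F(P_{\C}(\mathbf{1})\otimes X_j)$; and every object of $\C'$ is a quotient of a finite direct sum of the $P(L'_j)$, hence a subquotient of an object in the image of $F$. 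With this supplement, and precise references for the statement that a surjective tensor functor with $\FPdim(\C)=\FPdim(\C')$ is an equivalence, your argument is complete.
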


The present paper is built up as follows. Some definitions, notations and results related to stable categories, tensor categories and Hopf algebras are presented in Section \ref{section2}. We devote Section \ref{section3} to give a proof to our main results: Proposition \ref{result1} and Theorem \ref{result2}.

\section{Preliminaries}\label{section2}
Let $\k$ be an algebraically closed field throughout this paper. For any $\k$-algebra $A$, the category of finitely generated modules over $A$ is denoted by $A\-\mod$. About general background knowledge,
the reader is referred to \cite{ARS95} for stable categories, \cite{Mon93} for Hopf algebras and \cite{EGNO15} for tensor categories.

\subsection{Stable categories}
Let $\C$ be a $\k$-linear abelian category. The \textit{stable category} of $\C$ written as $\underline{\C}$ is defined as follows: The objects of $\underline{\C}$ are the same as those of $\C$; For any objects $X,Y\in\underline{\C}$, the morphisms from $X$ to $Y$ are given by the quotient space $$\underline{\Hom}_\C(X,Y)=\Hom_\C(X,Y)/\mathcal{P}(X,Y),$$where $\mathcal{P}(X,Y)$ is the subspace of $\Hom_\C(X,Y)$ consisting of homomorphisms which factor through a projective object.
We say two $\k$-linear abelian categories $\C$ and $\C'$ are \textit{stably equivalent}, if $\underline{\C}$ and $\underline{\C'}$ are $\k$-linear equivalent.

For simplicity of presentations, we stipulate the following notations.
\begin{notation}

We use $A\-\underline{\mod}$ to denote the stable category of $A\-\mod$. Talking about any stable categories $\underline{\C}$, the following notations are always used:
\begin{itemize}
    \item For $X, Y\in\underline{\C}$, let $\underline{f}$ denote the morphism in the quotient space $\underline{\Hom}_\C(X,Y)$ represented by $f\in \Hom_\C(X,Y)$. We use the diagram below to indicate $\underline{f}=0:$
    $$f: X\stackrel{i}\rightarrow P\stackrel{j}\rightarrow Y,$$
    where $f=j\circ i$ in $\Hom_{\C}(X,Y)$ and $P$ is a projective object in $\C$.
    \item Given a $\k$-linear functor $F:\C\rightarrow\C'$, if $F$ transforms projective objects to projective objects, then it induces a functor from $\underline{\C}$ to $\underline{\C'}$: $$\underline{F}:\underline{\C}\rightarrow\underline{\C'},\;\;X\mapsto F(X),\;\;\underline{f}\mapsto \underline{F(f)},$$
    where $X\in\underline{\C}$ and $f$ is a morphism in $\underline{\C}$.
\end{itemize}
\end{notation}
Recall that an artin algebra $A$ is said to be \textit{self-injective} if it is injective as an $A$-module. A great deal of mathematical effort in the representation theory of algebras has been devoted to the study of self-injective algebras. The following proposition tells us when a stable equivalence can be lifted to a Morita equivalence. 
\begin{lemma}
\emph{(}\cite[Proposition 2.5]{Lin96}\emph{)}\label{lem:quiv}
Let $A$ and $A'$ be self-injective $\k$-algebras having no projective simple modules and $F:A\text{-}\mod\rightarrow A'\text{-}\mod$ be an exact functor. Suppose $F$ induces a stable equivalence $\underline{F}:A\text{-}\underline{\mod}\rightarrow A'\text{-}\underline{\mod}$. Then $F$ is an equivalence if and only if $F$ maps any simple $A$-module to a simple $A'$-module.
\end{lemma}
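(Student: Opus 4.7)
The ``only if'' direction is immediate, since every $\k\-$linear equivalence carries simple objects to simple objects. For the substantive ``if'' direction, the plan is first to establish that $F$ preserves composition length: exactness of $F$ together with the simple-to-simple hypothesis ensures that applying $F$ to any composition series of an $A\-$module $X$ produces a composition series of $F(X)$ of the same length, so $\operatorname{length}(F(X))=\operatorname{length}(X)$. In particular $F(X)=0$ forces $X=0$, since any nonzero $X$ admits a simple submodule which is sent to a nonzero simple submodule of $F(X)$.

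Next I would show that $F$ induces a bijection on isomorphism classes of simples. For injectivity, suppose $F(S)\cong F(S')$ for non-isomorphic simples $S,S'$. Then $\Hom_A(S,S')=0$ forces $\underline{\Hom}_A(S,S')=0$, whereas non-projectivity of the simple module $F(S)$---guaranteed by the hypothesis on $A'$---gives $\underline{\End}_{A'}(F(S))=\k$, contradicting full faithfulness of $\underline{F}$. For surjectivity, given a simple $T$ of $A'\-\mod$, essential surjectivity of $\underline{F}$ produces $X\in A\-\mod$ (which may be chosen without any projective summand) such that $F(X)\cong T\oplus Q$ for some projective $Q$. Since $\underline{F}$ preserves indecomposables, I would reduce to the case where $X$ is indecomposable and non-projective, and then, by analysing the image under $F$ of a simple submodule $S\subseteq X$ (whose image is either isomorphic to $T$ or embeds into $Q$) and exploiting length preservation together with indecomposability of $X$, force $Q=0$ and $X=S$.

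To promote $F$ to a genuine equivalence, I would use the bijection on simples together with the requirement that $F$ preserves projectives (needed for $\underline{F}$ to be well-defined) to conclude that $F$ sends each indecomposable projective $P(S)$ to the projective cover $P(F(S))$: the surjection $F(P(S))\twoheadrightarrow F(S)$ identifies $P(F(S))$ as a direct summand of $F(P(S))$, and length preservation rules out any further summands. Full faithfulness then follows by comparing the short exact sequences
\begin{equation*}
0\to \mathcal{P}_A(X,Y)\to \Hom_A(X,Y)\to \underline{\Hom}_A(X,Y)\to 0
\end{equation*}
and its $A'\-$analogue: $F$ induces an isomorphism on the rightmost term because $\underline{F}$ is an equivalence, and on $\mathcal{P}_A(X,Y)$ because morphisms factoring through projectives are controlled by maps into and out of indecomposable projectives, on which $F$ acts bijectively. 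Essential surjectivity is then immediate from that of $\underline{F}$ up to adjusting by projective summands. The main obstacle I anticipate is the surjectivity step in the bijection on simples---forcing the preimage $X$ of a simple $T$ to be genuinely simple rather than merely of the form ``simple plus projective''---and this is precisely where the no-projective-simples hypothesis on both algebras must enter decisively.
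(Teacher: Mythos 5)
The paper offers no proof of this lemma to compare against: it is quoted directly from \cite[Proposition 2.5]{Lin96}. Judged on its own terms, your outline follows the natural strategy (length preservation, bijection on simples, control of projective covers, full faithfulness via the stable quotient), and the ``only if'' direction, the length-preservation step, and the injectivity-on-simples argument are all correct. However, the two decisive steps are left as declared intentions rather than arguments, and they are exactly where the content of the lemma lies.

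Concretely: (1) In the surjectivity step you must show that if $X$ is indecomposable non-projective with $F(X)\cong T\oplus Q$, $T$ simple and $Q$ projective, then $X$ is simple and $Q=0$. ``Length preservation together with indecomposability'' does not achieve this on its own: taking a simple submodule $S\subseteq X$, you must first rule out that $F(S)$ lies inside $0\oplus Q$, and the only way to do that is to know that the inclusion $S\hookrightarrow X$ is nonzero in the stable category, so that its image cannot factor through the projective $Q$. That statement is itself a nontrivial lemma --- it is essentially Lemma \ref{simple}(2) of the paper, proved by a pullback/pushout argument using injective hulls and self-injectivity --- and it is precisely where the ``no projective simples'' and Frobenius hypotheses enter. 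Once you have it, the remaining case does close: if $F(S)$ projects isomorphically onto $T$, then $F(X/S)\cong Q$ is projective, hence $X/S$ is projective (since $\underline{F}$ reflects stable triviality), the sequence $0\to S\to X\to X/S\to 0$ splits, and indecomposability forces $X=S$ and $Q=0$. (2) The claim that ``length preservation rules out any further summands'' in $F(P(S))\cong P(F(S))\oplus Q'$ is unsupported: it would require $\operatorname{length}(P(F(S)))\geq\operatorname{length}(P(S))$, which you have not established, and there is no hypothesis comparing $\dim_{\k}A$ with $\dim_{\k}A'$ to make a global count work. In Linckelmann's actual proof this is where the bimodule/adjoint structure of the stable equivalence is used (the adjoint $G$ satisfies $GF\cong \mathrm{Id}\oplus(\text{projective})$), an ingredient absent from your sketch. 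The same gap propagates into your full-faithfulness step: identifying $F:\mathcal{P}(X,Y)\to\mathcal{P}(F(X),F(Y))$ as an isomorphism requires exactly this control of morphisms into projective covers, so the assertion that these are ``controlled by maps into and out of indecomposable projectives, on which $F$ acts bijectively'' presupposes what still needs to be proved.
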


\subsection{Tensor categories}

We have the following basic properties about tensor categories.
\begin{lemma}\emph{(}\cite[Propositon 2.3]{EO04}\emph{)}\label{1}
Any projective object in a tensor category is also injective.
\end{lemma}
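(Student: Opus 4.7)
The plan is to exploit the rigid structure of $\C$. For any $X\in\C$, the functor $-\otimes X$ admits both a left adjoint $-\otimes X^*$ and a right adjoint $-\otimes {}^{*}X$ coming from the left and right duals, and these adjoints are themselves exact (since they in turn have adjoints by a second application of rigidity). A standard categorical fact then implies that $-\otimes X$ preserves both projectivity and injectivity; the analogous statement holds for $X\otimes -$.

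Granted this tensor-preservation, it suffices to prove that the projective cover $P_0\twoheadrightarrow \unit$ of the tensor unit is itself injective. For then any indecomposable projective $Q$ with simple top $L$ admits a surjection from $P_0\otimes L$ (since $P_0\otimes L$ surjects onto $\unit\otimes L=L$ and is projective by the preceding paragraph), so $Q$ is a direct summand of $P_0\otimes L$. Summands of injectives are injective, hence $Q$ is injective; as every projective is a finite direct sum of indecomposable projectives in a finite abelian category, every projective in $\C$ is injective.

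The main obstacle, and the technical heart, is to show $P_0$ itself is injective. The approach is to identify the socle of $P_0$ with a distinguished invertible object $D\in\C$ and thereby recognize $P_0$ as the injective hull of $D$. Equivalently, one can pick a projective generator $P$ and construct a non-degenerate Frobenius pairing on the algebra $\End_\C(P)^{\mathrm{op}}$ out of the evaluation and coevaluation morphisms supplied by rigidity. This step is a categorical avatar of Radford's $S^4$ formula for finite tensor categories, and is where the full weight of the rigidity hypothesis is used.
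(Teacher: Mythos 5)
Your first two paragraphs are sound: tensoring with any object preserves both projectivity and injectivity because $-\otimes X$ has exact adjoints on both sides, and this correctly reduces the lemma to showing that the projective cover $P_0$ of $\mathbf{1}$ is injective. The problem is that your third paragraph, which you yourself identify as the technical heart, is not a proof but a description of a goal. To ``identify the socle of $P_0$ with a distinguished invertible object $D$ and thereby recognize $P_0$ as the injective hull of $D$'' presupposes essentially what is to be proved: the existence and invertibility of the distinguished object, and even the fact that $P_0$ has simple socle, are in the literature (including \cite{EO04}) derived \emph{after} and \emph{from} the fact that projectives are injective, not before it. Likewise, asserting that evaluation and coevaluation supply ``a non-degenerate Frobenius pairing on $\End_{\C}(P)^{\mathrm{op}}$'' is merely a restatement of the claim that this endomorphism algebra is self-injective; no pairing is constructed and no non-degeneracy is checked. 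Radford's $S^4$ formula is a consequence of this Frobenius structure, not an available ingredient for establishing it.

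The statement is quoted from \cite[Proposition 2.3]{EO04}, whose argument is much shorter and bypasses $P_0$ entirely: left duality $X\mapsto X^*$ is a contravariant equivalence of $\C$ with itself, hence carries projective objects to injective objects; and if $P$ is projective, then $P^*$ is a direct summand of $P^*\otimes P\otimes P^*$ by the triangle identities for evaluation and coevaluation, and the latter object is projective by Lemma \ref{2}, so $P^*$ is projective. Applying the inverse duality, $P\cong {}^{*}(P^*)$ is injective. If you wish to keep your outline, the honest fix is to replace your third paragraph by this duality argument (applied to $P_0$ if you like, though the reduction to $P_0$ then becomes unnecessary).
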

\begin{lemma}\emph{(}\cite[Corollary 2, p.441]{KL94}\emph{)}\label{2}
Let $P$ be a projective object in a tensor category $\C$. Then $P\otimes X$ and $X\otimes P$ are both projective for any object $X\in\C$.
\end{lemma}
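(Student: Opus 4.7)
The plan is to exploit the rigidity built into the definition of a tensor category, which provides a right dual $X^{*}$ and a left dual ${}^{*}X$ for every object $X\in\C$. Rigidity together with the standard hom–tensor adjunction will convert the question ``is $P\otimes X$ projective?'' into the question ``is $\Hom_{\C}(P,-)$ exact when precomposed with a tensor functor?'', and projectivity of $P$ together with exactness of tensoring will close the argument.

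Concretely, first I would write down the natural isomorphism
\[
\Hom_{\C}(P\otimes X, Y)\;\cong\;\Hom_{\C}(P, Y\otimes X^{*}),
\]
valid for all $Y\in\C$ because $X^{*}$ is right adjoint to $X$ in $\C$ via evaluation and coevaluation. The right-hand side, as a functor in $Y$, is the composition of the two functors $(-\otimes X^{*})$ and $\Hom_{\C}(P,-)$. The first of these is exact: in any tensor category tensoring with a fixed object is biexact, which is a direct consequence of having both a left and a right adjoint. The second is exact by hypothesis, since $P$ is projective. Therefore $\Hom_{\C}(P\otimes X,-)$ is exact, which is precisely the statement that $P\otimes X$ is projective.

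For $X\otimes P$ I would run the symmetric argument using the left dual, invoking the adjunction
\[
\Hom_{\C}(X\otimes P, Y)\;\cong\;\Hom_{\C}(P, {}^{*}X\otimes Y),
\]
and concluding in the same way that this functor in $Y$ is exact.

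The only real subtlety is making sure the adjunction isomorphisms above are genuinely natural in $Y$ (so that exactness transfers), but this is immediate from the graphical/standard construction of the unit and counit of the duality, and from the naturality of the associator. No representation-theoretic input beyond rigidity and the definition of projectivity is needed; in particular one does not have to pass to any underlying algebra or make use of Lemma \ref{1}.
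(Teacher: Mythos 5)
Your argument is correct and is essentially the standard proof of this fact (the paper itself only cites \cite[Corollary 2, p.441]{KL94}, and the same adjunction argument appears as Proposition 4.2.12 in \cite{EGNO15}): rigidity gives the natural isomorphism $\Hom_{\C}(P\otimes X,Y)\cong\Hom_{\C}(P,Y\otimes X^{*})$, and the right-hand side is exact in $Y$ as a composite of the exact functor $-\otimes X^{*}$ with the exact functor $\Hom_{\C}(P,-)$. The only caveat is a purely terminological one: whether the dual appearing in each adjunction is called the left or the right dual depends on the convention, so you should fix conventions for evaluation and coevaluation before asserting which of $X^{*}$ or ${}^{*}X$ appears on which side; the mathematics is unaffected.
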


It is known that a \textit{tensor equivalence} is a $\k$-linear monoidal equivalence. Here we state the relations between tensor equivalences and \textit{gauge equivalences} in the case of Hopf algebras:
\begin{lemma}\label{tensor}\emph{(}\cite[Theorem 2.2]{NS08}\emph{)}
Let $H$ and $H'$ be finite-dimensional Hopf algebras over $\k$. If $H\-\mod$ and $H'\-\mod$ are tensor equivalent, then $H$ is gauge equivalent to $H'$ as Hopf algebras.
\end{lemma}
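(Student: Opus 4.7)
The plan is to invoke Tannakian reconstruction for finite-dimensional Hopf algebras. Recall that $H$ is recoverable from the pair $(H\-\mod,\omega_H)$ consisting of its module category together with the forgetful (fiber) functor $\omega_H:H\-\mod\to\k\-\mod$; concretely, $H$ is canonically isomorphic as a Hopf algebra to the coendomorphism coalgebra of $\omega_H$, with its algebra, coalgebra and antipode structures read off from the monoidal structures of $H\-\mod$ and of $\omega_H$. The strategy is to use the given tensor equivalence $F:H\-\mod\to H'\-\mod$ to manufacture a second fiber functor on $H\-\mod$, namely $\omega':=\omega_{H'}\circ F$, and to compare the reconstructions coming from $\omega_H$ and from $\omega'$.

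First I would verify that $\omega'$ really is a fiber functor: it is $\k\-$linear, faithful and exact because $F$ is an equivalence and $\omega_{H'}$ has these properties, and it inherits a monoidal structure as a composition of monoidal functors. Applying Tannakian reconstruction to $(H\-\mod,\omega')$ and transporting the result across the equivalence $F$ identifies the reconstructed Hopf algebra with $H'$. In this way the problem reduces to comparing $\omega_H$ and $\omega'$ as fiber functors on the single category $H\-\mod$.

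Next I would show that $\omega_H$ and $\omega'$ are isomorphic as $\k\-$linear (not yet monoidal) functors. Both are tensor functors, so they preserve Frobenius-Perron dimensions, which agree with $\k\-$dimensions in the module category of a finite-dimensional Hopf algebra; consequently $\dim_\k\omega_H(X)=\dim_\k\omega'(X)$ for every $X$. A standard representability argument for exact faithful $\k\-$linear functors on $H\-\mod$, exploiting the projective generator ${}_HH$, then produces a natural isomorphism $\eta:\omega_H\Rightarrow\omega'$ of $\k\-$linear functors. The failure of $\eta$ to be monoidal is captured by an invertible family of natural automorphisms of $\omega_H(X)\otimes\omega_H(Y)$ which, via the coendomorphism description, corresponds to an invertible element $J\in H\otimes H$; the coherence constraints on the monoidal structures of $\omega_H$ and $\omega'$ translate into the 2-cocycle (twist) condition for $J$.

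Finally, transporting the reconstruction of $H'$ back through $\eta$ shows that the Hopf algebra built from $\omega'$ is exactly the gauge twist $H^J$, so $H'\cong H^J$ and $H$ is gauge equivalent to $H'$. The main obstacle lies in the Tannakian reconstruction package itself: one must establish a Hopf algebra structure on the coendomorphism coalgebra of an arbitrary fiber functor, verify that it recovers the original Hopf algebra up to canonical isomorphism, and track how a change of fiber functor by a non-monoidal natural isomorphism precisely deforms this Hopf structure by a 2-cocycle. These steps form the technical heart of the argument in \cite{NS08}.
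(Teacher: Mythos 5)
The paper offers no proof of this lemma—it is quoted verbatim from \cite[Theorem 2.2]{NS08}—and your sketch reproduces exactly the argument underlying that reference: use the equivalence to build a second fiber functor $\omega_{H'}\circ F$ on $H\-\mod$, identify it with $\omega_H$ as a plain $\k$-linear functor via representability and dimension counting, and read off the discrepancy of monoidal structures as a Drinfeld twist $J\in H\otimes H$ realizing $H'\cong H^J$. This is the correct and standard route, so there is nothing to add beyond noting that the details you defer are precisely those carried out in \cite{NS08}.
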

An important technical tool in the study of tensor categories is Frobenius-Perron dimensions. Due to \cite[Proposition 4.5.4]{EGNO15}, one can define the Frobenius-Perron dimensions of objects in a tensor category $\C$. To be specific, for each object $X\in\C$, $\FPdim(X)$ is the largest positive eigenvalue of the matrix of left or right multiplication by $X$. Furthermore, $\FPdim$ is the unique additive and multiplicative map which takes positive values on all simple objects of $\C$. Here is a lemma which we will need later.
\begin{lemma}\emph{(}\cite[Proposition 4.5.7]{EGNO15}\emph{)}\label{FPdim}
Let $\C$ and $\C'$ be finite tensor categories. If $F:\C\rightarrow\C'$ is an exact $\k\-$linear monoidal functor, then $\FPdim(F(X))=\FPdim(X)$ for any $X\in\C$.
\end{lemma}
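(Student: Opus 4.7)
The plan is to invoke the uniqueness property of Frobenius--Perron dimension recalled immediately before the lemma: $\FPdim_{\C}$ is the unique additive and multiplicative map from $\C$ (factoring through the Grothendieck ring) to $\mathbb{R}$ that takes positive values on all simple objects. I would define $\phi\colon\C\to\mathbb{R}$ by $\phi(X):=\FPdim_{\C'}(F(X))$, verify that $\phi$ satisfies these three properties, and then conclude $\phi=\FPdim_{\C}$.

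Additivity of $\phi$ follows from exactness of $F$: a short exact sequence in $\C$ is sent by $F$ to a short exact sequence in $\C'$, and $\FPdim_{\C'}$ is additive on short exact sequences; hence $\phi$ descends to a group homomorphism $\Gr(\C)\to\mathbb{R}$. Multiplicativity is immediate from the monoidal structure $F(X\otimes Y)\cong F(X)\otimes F(Y)$ combined with multiplicativity of $\FPdim_{\C'}$.

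The crucial step is positivity on simples, namely $\FPdim_{\C'}(F(S))>0$ for every simple $S\in\C$. Since $\FPdim_{\C'}$ is additive and positive on the simple objects of $\C'$, it suffices to show $F(S)\neq 0$, and more generally that $F$ is faithful. Here I would use rigidity of $\C$: for any nonzero $X$, the coevaluation $\mathrm{coev}_X\colon\unit\to X\otimes X^*$ must be nonzero, for otherwise the snake identity $(\id_X\otimes\mathrm{ev}_X)\circ(\mathrm{coev}_X\otimes\id_X)=\id_X$ would force $\id_X=0$ and hence $X=0$. Because $\unit$ is simple, $\mathrm{coev}_X$ is then a monomorphism. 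Applying the exact monoidal functor $F$ yields a monomorphism $\unit'\cong F(\unit)\hookrightarrow F(X)\otimes F(X^*)$, which contradicts $F(X)=0$. Thus $F(X)\neq 0$ whenever $X\neq 0$, and in particular $F(S)\neq 0$ for simple $S$.

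Having verified that $\phi$ is additive, multiplicative, and positive on simple objects of $\C$, the uniqueness clause gives $\phi(X)=\FPdim_{\C}(X)$ for all $X\in\C$, which is the desired identity. The only real obstacle is the faithfulness step; it is mild but essential and is the place where rigidity of a tensor category (as opposed to a general abelian monoidal category) enters the argument.
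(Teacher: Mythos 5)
The paper gives no proof of this lemma---it is quoted verbatim from \cite[Proposition 4.5.7]{EGNO15}---and your argument is essentially the standard one found there: the assignment $X\mapsto\FPdim_{\C'}(F(X))$ is additive (by exactness of $F$), multiplicative (by monoidality), and positive on simples, hence coincides with $\FPdim_{\C}$ by the uniqueness clause of \cite[Proposition 4.5.4]{EGNO15}. Your rigidity argument for $F(S)\neq 0$ correctly supplies the one nontrivial verification, so the proposal is correct and matches the cited proof in approach.
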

Let $\{L_i\}_{i\in I}$ be the set of isomorphic classes of simple objects of $\C$, and $P_i$ denotes the projective cover of $L_i$ for each $i$.
\begin{definition}\emph{(}\cite[Definition 6.1.6]{EGNO15}\emph{)}
Let $\C$ be a  finite tensor category. Then the Frobenius-Perron dimension of $\C$ is defined by 
$$\FPdim(\C):=\sum_{i\in I}\FPdim(L_i)\FPdim(P_i)$$

\end{definition}
For a finite dimensional Hopf algebra $H$, it is easy to see  $\FPdim(H\-\mod)=\dim_{\k}(H)$, which can be found in \cite[Example 6.1.9]{EGNO15}.

\subsection{Tensor triangulated categories}
 
In retrospect,  all  finite  tensor  categories  are  Frobenius  categories by Lemma \ref{1}. Meanwhile the stable categories of Frobenius categories are triangulated categories \cite[Theorem 2.6]{Hap88}. 

According to \cite{NVY19}, a \textit{tensor \emph{(}monoidal\emph{)} triangulated category} is a triangulated category having a monoidal structure \cite[Chapter \uppercase\expandafter{\romannumeral7}]{Mac98} 
$$\otimes:\C\times\C\rightarrow\C$$
and a unit object $\mathbf{1}$ $\in\C$, such that the bifunctor $-\otimes-$ is exact in each variable.
Then the stable categories of finite tensor categories are naturally tensor triangulated categories.

Two tensor triangulated categories $\underline{\C}$ and $\underline{\C'}$ are said to be \textit{tensor triangulated equivalent} if there is a monoidal functor making $\underline{\C}$ and  $\underline{\C'}$ be triangulated equivalent.
Our aim in this paper is to show that a tensor equivalence can be recovered by a stable equivalence as a special form of tensor triangulated equivalences. Note that a stable equivalence induced by an exact $\k\-$linear monoidal functor is clearly a tensor triangulated equivalence.

\section{Main results}\label{section3}
In the begining, we turn to mention the relation between the Chevalley property and the existence of simple projective objects. A Hopf algebra is said to have \textit{the Chevalley property}, if the tensor product of two simple modules is semisimple. Generally, let us say a tensor category has \textit{the Chevalley property} if the category of semisimple objects is a tensor subcategory \cite[Definition 4.1]{AEG01}.

The following lemma is contributed to simplify the assumptions of our results.

\begin{lemma}\label{pro}
Let $\C$ be a non-semisimple finite tensor category with the Chevalley property. Then $\C$ has no simple projective objects.
\end{lemma}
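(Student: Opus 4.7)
My plan is to argue by contradiction: assume $L$ is a simple projective object in $\C$, and deduce that $\C$ must be semisimple.

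The first step is to produce a projective copy of the unit $\1$. Since $\C$ is a (rigid) tensor category, the dual $L^*$ exists and is again simple. By Lemma \ref{2}, the object $L\otimes L^*$ is projective. By the Chevalley property, the tensor product of the two simple objects $L$ and $L^*$ is semisimple, so $L\otimes L^*$ decomposes into a direct sum of simple objects. The coevaluation morphism $\1\to L\otimes L^*$ is nonzero (equivalently, $\Hom_\C(\1,L\otimes L^*)\cong \Hom_\C(L,L)\neq 0$), so $\1$ embeds into $L\otimes L^*$. Since the latter is semisimple, $\1$ must appear as a direct summand, and being a summand of a projective it is itself projective.

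The second step is to bootstrap from projectivity of $\1$ to projectivity of every object. For an arbitrary $X\in\C$, the unit isomorphism gives $X\cong \1\otimes X$; invoking Lemma \ref{2} once more with $P=\1$ shows that $\1\otimes X$ is projective, hence so is $X$.

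Finally, in any abelian category where every object is projective, every short exact sequence splits, so the category is semisimple. This contradicts the hypothesis that $\C$ is non-semisimple, completing the proof. I do not anticipate a serious obstacle here; the only subtle point is ensuring that $\1$ is a \emph{direct summand} (not merely a subobject) of $L\otimes L^*$, which is precisely where the Chevalley property is used, together with the standard fact from rigidity that the coevaluation is nonzero.
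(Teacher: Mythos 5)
Your proof is correct and follows essentially the same route as the paper: realize $\mathbf{1}$ as a direct summand of the projective semisimple object $L\otimes L^*$ (via the Chevalley property and Lemma \ref{2}), conclude $\mathbf{1}$ is projective, and derive semisimplicity of $\C$ for a contradiction. The only cosmetic difference is that you spell out the last step directly (every $X\cong\mathbf{1}\otimes X$ is projective), where the paper cites \cite[Corollary 4.2.13]{EGNO15}.
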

\begin{proof}
Otherwise, let $L$ be a simple projective object in $\C$. Since $L\otimes L^*$ is semisimple, $\mathbf{1}$ is a direct summand of it. Moreover, Lemma \ref{2} tells us that $L\otimes L^*$ is projective as $L$ is projective. This implies $\mathbf{1}$ is also projective, then $\C$ is semisimple by \cite[Corollary 4.2.13]{EGNO15}, a contradiction.
\end{proof}

A direct consequence of this lemma is:
\begin{corollary}\label{pro2}
Let $H$ be a finite-dimensional non-semisimple Hopf algebra with the Chevalley property. Then $H\-\mod$ has no simple projective modules.
\end{corollary}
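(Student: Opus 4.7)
The plan is to recognize this statement as an immediate specialization of Lemma \ref{pro} to the particular tensor category $H\-\mod$, so the bulk of the argument is simply a translation of hypotheses from the Hopf-algebraic setting to the categorical setting.

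First I would verify that $H\-\mod$ satisfies the assumptions of Lemma \ref{pro}. Since $H$ is finite-dimensional, $H\-\mod$ is a finite tensor category; since $H$ is non-semisimple, the regular module $H$ is not a direct sum of simples, so $H\-\mod$ is non-semisimple as an abelian category. For the Chevalley property, the Hopf-algebraic definition (tensor product of simple $H$-modules is semisimple) matches the categorical definition once one observes that the full subcategory of semisimple $H$-modules is closed under tensor products, duals, and direct summands, and hence is a tensor subcategory in the sense of \cite[Definition 4.1]{AEG01}.

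Having verified the hypotheses, I would invoke Lemma \ref{pro} directly to conclude that $H\-\mod$ has no simple projective objects, which is exactly the assertion that $H\-\mod$ has no simple projective modules. Since the lemma does all the real work (using the fact that for a simple projective $L$, the unit $\mathbf{1}$ would appear as a summand of the projective object $L\otimes L^*$, forcing semisimplicity), the corollary is simply a restatement in Hopf-algebraic language.

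There is no substantive obstacle here; the only point worth flagging is the equivalence of the two formulations of the Chevalley property, which is essentially a definitional check.
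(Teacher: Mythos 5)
Your proposal is correct and matches the paper exactly: the paper states this corollary as a direct consequence of Lemma \ref{pro}, which is precisely the specialization you carry out. Your extra care in checking that the Hopf-algebraic and categorical formulations of the Chevalley property agree is a reasonable (if routine) addition that the paper leaves implicit.
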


It is easy to see that a tensor category in which every simple object is invertible (in the sense of \cite[Definition 2.11.1]{EGNO15})
has the Chevalley property by \cite[Proposition 4.12.4]{EGNO15}. With this observation, we are in a position to show our first main conclusion now:

\textbf{Proof of Proposition \ref{result1}.}
We claim $F$ maps simple objects to simple objects. Actually, for any simple object $L\in\C$, we have:
$$F(L^*)\otimes F(L)\cong F(L^*\otimes L)\cong F(\k)\cong\k$$
then $$\mathrm{length}(F(L^*))\mathrm{length}(F(L))\leq \mathrm{length}(F(L^*)\otimes F(L))=\mathrm{length}(\k)=1,$$
where $\mathrm{length}(\-)$ denotes the length of the Jordan-H\"{o}lder series.
Hence $\mathrm{length}(F(L))=1$, that is, $F(L)$ is a simple object. 

Since $\C$ and $\C'$ are finite, we may assume $\C\cong A\-\mod$, $\C'\cong A'\-\mod$ as $\k$-linear abelian categories, where $A$ and $A'$ are finite-dimensional $\k$-algebras. In addition, as $\C$ and $\C'$ are tensor categories, $A$ and $A'$ also can be self-injective according to Lemma \ref{1}. Moreover, $\C$ and $\C'$ have no projective simple objects by Lemma \ref{pro}. As a result, $F$ is a $\k$-linear equivalence by Lemma \ref{lem:quiv}. Consequently it is a tensor equivalence. \qed

Note that a Hopf algebra $H$ is \textit{basic} if and only if every simple object in the tensor category of finite-dimensional $H\-$modules is invertible. So the following conclusion is directly obtained.
\begin{corollary}\label{basic}
Let $H$ and $H'$ be  finite-dimensional non-semisimple basic Hopf algebras. Suppose $F:H\-\mod\rightarrow H'\-\mod$ is an exact $\k\-$linear monoidal functor inducing a stable equivalence $\underline{F}:H\-\underline{\mod}\rightarrow H'\-\underline{\mod}$. Then $H$ and $H'$ are gauge equivalent. 
\end{corollary}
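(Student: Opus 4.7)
The plan is to reduce the corollary directly to Proposition \ref{result1} together with Lemma \ref{tensor}, using the characterization of basic Hopf algebras recalled just before the statement. First, I would unpack the hypothesis: for a finite-dimensional Hopf algebra $H$, being basic is equivalent to the statement that every simple object in the finite tensor category $\C:=H\-\mod$ is invertible, and the same holds for $\C':=H'\-\mod$. Since $H$ and $H'$ are non-semisimple, the categories $\C$ and $\C'$ are non-semisimple finite tensor categories. Thus the hypotheses of Proposition \ref{result1} are verified verbatim for the exact $\k$-linear monoidal functor $F:\C\to\C'$.

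Next, I would apply Proposition \ref{result1} to conclude that $F$ is a tensor equivalence. In particular, $H\-\mod$ and $H'\-\mod$ are tensor equivalent as $\k$-linear monoidal categories.

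Finally, invoking Lemma \ref{tensor} (which records \cite[Theorem 2.2]{NS08}) transports this tensor equivalence into the statement that $H$ and $H'$ are gauge equivalent as Hopf algebras, which is the desired conclusion. There is effectively no obstacle here: the corollary is a packaging of Proposition \ref{result1} applied to the special case of module categories of basic Hopf algebras, combined with Ng--Schauenburg's reconstruction result. The only point to verify carefully is the equivalence between $H$ being basic and all simples in $H\-\mod$ being invertible, which is a standard fact (simples of a basic finite-dimensional Hopf algebra are one-dimensional, hence group-like-like and invertible under the tensor product). All real content has already been discharged in the proof of Proposition \ref{result1}.
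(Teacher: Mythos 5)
Your proposal is correct and matches the paper's intended argument exactly: the paper derives this corollary immediately from Proposition \ref{result1} via the observation that $H$ is basic if and only if every simple in $H\-\mod$ is invertible, and then applies Lemma \ref{tensor} to pass from the tensor equivalence to gauge equivalence. No difference in approach worth noting.
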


To prove our second main results, we need several lemmas.
First, let us make some basic observations.

\begin{lemma}\label{projective}
Let $\C$ be a non-semisimple  finite Frobenius $\k\-$linear abelian category. 
\begin{itemize}
    \item [(1)] Let $f:X\rightarrow Y$ be an epimorphism in $\C$. If $\underline{f}=0$ in $\underline{\C}$, then 
%we can always assume 
$f$ has the following form: 
$$f: X\stackrel{i}\rightarrow P(Y)\stackrel{p}\twoheadrightarrow Y,$$
where $(P(Y),p)$ is a projective cover of $Y$ and $f=p\circ i$.
  \item [(2)] Let $g:X\rightarrow Y$ be a monomorphism in $\C$. If $\underline{g}=0$ in $\underline{\C}$, then 
%we can always assume 
$f$ has the following form: 
$$g: X\stackrel{i'}\rightarrowtail I(X)\stackrel{p'}\rightarrow Y,$$
where $(I(X),i')$ is an injective hull of $X$ and $g=p'\circ i'$.
\end{itemize}
\end{lemma}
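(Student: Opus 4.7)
The plan is to start from the hypothesis $\underline{f}=0$ (respectively $\underline{g}=0$), which by definition provides a factorization of $f$ (resp.\ $g$) through \emph{some} projective object $P$ (resp.\ $I$, which by the Frobenius hypothesis is automatically injective), and then to reroute this factorization through the canonical projective cover $P(Y)$ (resp.\ injective hull $I(X)$). The rerouting is achieved in each case by solving a standard lifting/extension problem for projective/injective objects.

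For part (1), by definition of $\underline{f}=0$ there is a factorization $f = j\circ \iota$ with $\iota\colon X\to P$ and $j\colon P\to Y$ for some projective $P$. Since $P$ is projective and the projective cover map $p\colon P(Y)\twoheadrightarrow Y$ is an epimorphism, the standard lifting property produces $q\colon P\to P(Y)$ with $p\circ q = j$. Then $f = j\circ \iota = p\circ(q\circ \iota)$, so setting $i := q\circ \iota\colon X\to P(Y)$ yields the desired form $f = p\circ i$. Note that the epimorphism hypothesis on $f$ is not actually used in this argument; it merely reflects the setting in which the lemma will later be applied.

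For part (2), dually, $\underline{g}=0$ yields $g = \pi\circ \mu$ with $\mu\colon X\to I$ and $\pi\colon I\to Y$ for some projective object $I$, which is injective by the Frobenius assumption. Since $I$ is injective and the injective hull inclusion $i'\colon X\rightarrowtail I(X)$ is a monomorphism, the extension property supplies $r\colon I(X)\to I$ with $r\circ i' = \mu$. Then $g = \pi\circ \mu = \pi\circ r\circ i' = (\pi\circ r)\circ i'$, so taking $p':= \pi\circ r$ gives the required factorization $g = p'\circ i'$.

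No serious obstacle is anticipated. The only subtle point is to invoke projectivity and injectivity in the correct direction — lifting a map \emph{out of} the projective $P$ against the epimorphism $p$ in part (1), and extending a map \emph{out of} $X$ through the monomorphism $i'$ into the injective $I$ in part (2). The Frobenius hypothesis enters precisely to identify the projective intermediate object in the definition of $\underline{g}=0$ with an injective one, so that the extension problem in (2) is well-posed.
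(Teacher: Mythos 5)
Your proof is correct and takes essentially the same route as the paper: in part (1) the paper likewise produces the comparison map $P\to P(Y)$ by lifting against the projective cover $p$ (it additionally observes that this lift is an epimorphism, using that $f$ is epi, but that is not needed for the stated factorization), and part (2) is handled dually, exactly as you do, with the Frobenius hypothesis identifying the projective intermediate object as injective. Your remark that the epimorphism hypothesis on $f$ is not actually used for the conclusion as stated is accurate.
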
 

\begin{proof}
\begin{itemize}
    \item [(1)]
According to $\underline{f}=0$ in $\underline{\C}$, we can find a projective object $P$ such that $f=\beta\circ\alpha$, where
$$f: X\stackrel{\alpha}\rightarrow P\stackrel{\beta}\twoheadrightarrow Y.$$
Moreover, since $f$ is an epimorphism, so is $\beta$.
By the universal property of projective cover, there exists an epimorphism $h:P\twoheadrightarrow P(Y)$ such that $p\circ h=\beta$. 

As a result, we have:
$$f: X\stackrel{h \alpha}\rightarrow P(Y)\stackrel{p}\twoheadrightarrow Y.$$ 
\item [(2)] We omit the proof, which is similar to (1).

\end{itemize}
\end{proof}
Next result is a categorical version of a result in representation theory of artin algebras. 
\begin{lemma}\emph{(}\cite[cf. Proposition 1.1, p.336]{ARS95}\emph{)}\label{ind}
Let $\C$ and $\C'$ be two non-semisimple finite $\k\-$linear abelian categories and $F:\C\rightarrow \C'$ be a $\k\-$linear functor inducing a stable equivalence $\underline{F}:\underline{\C}\rightarrow \underline{\C'}$. Then $F$ gives a one to one correspondence between the isoclasses of indecomposable non-projective objects in $\C$ and $\C'$.
\end{lemma}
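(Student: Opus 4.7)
The plan is to work on the stable side and transport everything through the equivalence $\underline{F}$. The key is to establish, for any non-semisimple finite $\k$-linear abelian category $\C$, a natural bijection between the isoclasses of indecomposable non-projective objects in $\C$ and the isoclasses of nonzero indecomposable objects in $\underline{\C}$, realized by $X\mapsto X$. Granting this dictionary for both $\C$ and $\C'$, the lemma follows at once: $\underline{F}$ is a $\k$-linear additive equivalence and therefore restricts to a bijection on isoclasses of nonzero indecomposables in $\underline{\C}$ and $\underline{\C'}$, which pulls back to the desired correspondence.

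To set up the dictionary I would use the fact that a finite $\k$-linear abelian category is Krull--Schmidt, so every object decomposes into indecomposables with local endomorphism rings. First, an object $X$ is zero in $\underline{\C}$ precisely when $\id_X$ factors through a projective, equivalently when $X$ is a summand of a projective, equivalently when $X$ itself is projective; in particular every nonzero indecomposable in $\underline{\C}$ has, up to stable isomorphism, an indecomposable non-projective representative in $\C$ (obtained by discarding projective summands in a Krull--Schmidt decomposition). Second, for an indecomposable non-projective $X$, I would show the inclusion $\mathcal{P}(X,X)\subseteq \rad \End_\C(X)$: any $\phi\in\mathcal{P}(X,X)$ factors through some projective $P$, and if $\phi$ were invertible then $X$ would be a direct summand of $P$, contradicting non-projectivity. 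Since $\End_\C(X)$ is local, its quotient $\underline{\End}_\C(X)$ is local and nonzero, so $X$ remains indecomposable in $\underline{\C}$.

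The main technical step is injectivity of the dictionary: two indecomposable non-projective objects $X,Y\in\C$ that become isomorphic in $\underline{\C}$ must already be isomorphic in $\C$. Given mutually inverse $\underline{f}:X\to Y$ and $\underline{g}:Y\to X$ in $\underline{\C}$, the difference $gf-\id_X$ lies in $\mathcal{P}(X,X)\subseteq\rad \End_\C(X)$, which forces $gf$ to be a unit in the local ring $\End_\C(X)$, hence an automorphism of $X$; symmetrically $fg$ is an automorphism of $Y$, and therefore $f$ is a genuine isomorphism in $\C$. This completes the dictionary, and together with the analogous statement on $\C'$ finishes the proof. The essential obstacle is precisely this lifting step, where Krull--Schmidt and non-projectivity interact through the inclusion $\mathcal{P}(X,X)\subseteq\rad\End_\C(X)$; everything else is formal.
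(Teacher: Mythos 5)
Your proof is correct and rests on the same ingredients as the paper's: the inclusion $\mathcal{P}(X,X)\subseteq\rad\End_{\C}(X)$ for indecomposable non-projective $X$ (which the paper cites from Auslander), locality passing to the quotient $\underline{\End}_{\C}(X)$, and Krull--Schmidt. The only difference is organizational --- you route the argument through an explicit bijection with the nonzero indecomposables of the stable category and spell out the lifting of stable isomorphisms to genuine ones, a step the paper leaves implicit in its verification that $\Phi$ and $\Psi$ are mutually inverse.
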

\begin{proof}
For the reason that $\C$ and $\C'$ are finite $\k\-$linear abelian categories, we can assume $\C\cong A\-\mod$, $\C'\cong A'\-\mod$ as $\k$-linear abelian categories, where $A$ and $A'$ are finite-dimensional $\k$-algebras. For any $A\-$module $X$, we have the result that $\mathcal{P}(X,X)\subseteq \rad \End_A(X)$ if and only if $X$ has no non-zero projective direct summand (See \cite[Proposition 2.5]{Aus69}.). It follows that $\End_A(X)$ is local if and only if $\End_{A'}(X')$ is local where $F(X)\cong X'\oplus P'$ satisfying that $X'$ has no non-zero projective direct summand and $P'$ is projective. That is, $X$ is indecomposable if only if $X'$ is indecomposable.

Hence we have the following one to one correspondence
$$
\xymatrix{
{\left\{
\begin{array}{c}
\text{Isoclasses of indecomposable} \\
\text{non-projective $A$-modules}
\end{array}
\right\} }
\ar@<0.7ex>[r]^{\Phi}
&
{\left\{
\begin{array}{c}
\text{Isoclasses of indecomposable} \\
\text{non-projective $A'$-modules}
\end{array}
\right\} }
\ar@<0.7ex>[l]^{\Psi} 
}.
$$
Specifically, for any indecomposable non-projective $A$-module $X$, we define $\Phi(X)=X'$ satisfying that $F(X)\cong X'\oplus P'$ for some projective $A'$-module $P'$. Conversely, for any indecomposable non-projective $A'$-module $Y'$, we define $\Psi(Y')=Y$ satisfying that $F(Y)\cong Y'\oplus Q'$ for some projective $A'$-module $Q'$. It is directly to see $\Phi$ and $\Psi$ are well-defined by Krull-Schmidt Theorem. Moreover, $\Phi\circ\Psi=\id$ and $\Psi\circ\Phi=\id$.
The proof is completed.
\end{proof}
Under the assumption of Lemma $\ref{ind}$, there is a pair of mutually inverse maps still denoted by $\Phi$ and $\Psi$
$$
\xymatrix{
{\left\{
\begin{array}{c}
\text{Isoclasses of indecomposable} \\
\text{non-projective objects in $\C$}
\end{array}
\right\} }
\ar@<0.7ex>[r]^{\Phi}
&
{\left\{
\begin{array}{c}
\text{Isoclasses of indecomposable} \\
\text{non-projective objects in $\C'$}
\end{array}
\right\} }
\ar@<0.7ex>[l]^{\Psi} 
}.
$$

Using the above lemma, we have the following result.
\begin{lemma}\label{simple}
Let $\C$ and $\C'$ be non-semisimple  finite Frobenius $\k\-$linear abelian categories. Suppose a $\k\-$linear functor $F:\C\rightarrow\C'$ induces a stable equivalence between $\C$ and $\C'$. 
\begin{itemize}
    \item [(1)] For any indecomposable non-projective object $X\in\C$ and any simple object $L'\in\C'$, we have $L'$ is a quotient object of $\Phi(X)$ if and only if $\underline{\Hom}_{\C'}(\Phi(X),L')\neq0.$
    \item [(2)] For any indecomposable non-projective object $Y'\in\C'$ and any simple object $L\in\C$, we have $L$ is a subobject of $\Psi(Y')$ if and only if $\underline{\Hom}_{\C}(L,\Psi(Y'))\neq0.$
\end{itemize}
\end{lemma}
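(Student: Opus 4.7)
The two parts are dual to each other, and both reduce to a statement that lives entirely inside one of the categories: since $\Phi(X)$ is an indecomposable non-projective object of $\C'$ (by Lemma \ref{ind}) and $\Psi(Y')$ is an indecomposable non-projective object of $\C$, the stable equivalence plays only a labeling role; what must be proved is that for any indecomposable non-projective $M$ and any simple $L$ in a finite Frobenius $\k$-linear abelian category, $L$ is a quotient of $M$ iff $\underline{\Hom}(M,L)\neq 0$ (and dually for subobjects). I would handle (1) in detail and then derive (2) by a dual argument.

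For the easy direction of (1), if $\underline{\Hom}_{\C'}(\Phi(X),L')\neq 0$, take a representative $f:\Phi(X)\to L'$ with $\underline{f}\neq 0$; in particular $f\neq 0$, and since $L'$ is simple, $f$ is an epimorphism, so $L'$ is a quotient of $\Phi(X)$. For the converse, assume we are given an epimorphism $f:\Phi(X)\twoheadrightarrow L'$ with $\underline{f}=0$ and derive a contradiction. By Lemma \ref{projective}(1), $f$ factors as $\Phi(X)\xrightarrow{i} P(L')\xrightarrow{p} L'$ with $p$ the projective cover. Because $p$ is an essential epimorphism and $p\circ i=f$ is epi, $i$ is itself an epimorphism $\Phi(X)\twoheadrightarrow P(L')$. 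Since $P(L')$ is projective, this epimorphism splits, exhibiting $P(L')$ as a direct summand of $\Phi(X)$. But $\Phi(X)$ is indecomposable and non-projective, so the only way out is $P(L')=0$, which is absurd as $L'$ is simple. Contradiction, so $\underline{f}\neq 0$.

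For (2), the same strategy runs dually. The $(\Leftarrow)$ direction is immediate: any $g:L\to\Psi(Y')$ with $\underline{g}\neq 0$ is nonzero and, $L$ being simple, must be a monomorphism. For $(\Rightarrow)$, given a monomorphism $g:L\hookrightarrow\Psi(Y')$ with $\underline{g}=0$, apply Lemma \ref{projective}(2) to factor $g$ as $L\xrightarrow{i'} I(L)\xrightarrow{p'}\Psi(Y')$ with $i'$ the injective hull. Since $g$ is mono, $L\cap\Ker(p')=0$ inside $I(L)$, and essentiality of the injective hull forces $\Ker(p')=0$, so $p':I(L)\hookrightarrow\Psi(Y')$ is a monomorphism. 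Because $\C$ is Frobenius, $I(L)$ is also projective (and in particular injective, so the mono splits), hence $I(L)$ is a direct summand of $\Psi(Y')$. Indecomposability and non-projectivity of $\Psi(Y')$ again force $I(L)=0$, contradicting $L\neq 0$.

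The only mildly subtle point in this argument is the upgrade from a factorization through $P(L')$ (respectively through $I(L)$) to an actual splitting: this is where the essentiality of projective covers and injective hulls is used to turn the factoring map into an epi (resp.\ mono), after which projectivity (resp.\ injectivity, which equals projectivity in a Frobenius category) produces the splitting. Once that step is in place, the contradiction with indecomposability and non-projectivity is immediate, so I expect no further technical obstacle.
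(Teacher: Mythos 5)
Your proof is correct, and for the crucial ``only if'' direction it is noticeably more direct than the paper's. Both arguments begin identically: factor the epimorphism $f:\Phi(X)\twoheadrightarrow L'$ through the projective cover $P(L')$ via Lemma \ref{projective}(1), and aim to show that the factoring map $i:\Phi(X)\rightarrow P(L')$ is an epimorphism, whence it splits and exhibits the nonzero projective $P(L')$ as a direct summand of the indecomposable non-projective $\Phi(X)$ --- a contradiction. Where you obtain ``$i$ is epi'' in one line from the defining essentiality of the projective cover ($p\circ i$ epi implies $i$ epi), the paper instead forms the pushout of $j$ along the cokernel map $t:P(L')\rightarrow\Coker(i)$ and runs a two-case analysis on the pushout object $N$: it first rules out $N\neq 0$, and then extracts $\Coker(i)=0$ from $N=0$ by comparing projective covers. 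The two routes establish exactly the same intermediate fact; yours buys brevity and makes the role of essentiality transparent, while the paper's avoids invoking the essential-epimorphism characterization explicitly. Your part (2) is the faithful dual (essential monomorphism into the injective hull, plus injective $=$ projective in a Frobenius category, so the split summand $I(L)$ must vanish), which matches the paper's indication that (2) follows by the dual pullback argument. I see no gaps.
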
 
\begin{proof}
\begin{itemize}
    \item [(1)]
``Only if\," part: We claim the epimorphism $f:\Phi(X)\rightarrow L'$ satisfies $\underline{f}\neq 0$, which would follow that $\underline{\Hom}_{\C'}(\Phi(X),L')\neq0$. First, we note that $L'$ must be non-projective as $\Phi(X)$ is indecomposable and non-projective. Assume on the contrary that $f$ has the following form: $$f:\Phi(X)\stackrel{i}\rightarrow P(L')\stackrel{j}\twoheadrightarrow L'$$ where $P(L')$ can be chosen as a projective cover of $L'$ by Lemma \ref{projective} (1).
Let us consider the following commuting diagram:
$$
\begin{tikzcd}
{\Phi(X)} \arrow[r,"i"] & {P(L')} \arrow[r, two heads,"t"] \arrow[d, two heads,"j"'] & {\Coker(i)} \arrow[d,two heads,"\beta"] \\
                   & {L'} \arrow[r,two heads,"\alpha"]                                 & {N}          
\end{tikzcd}
$$
where $(\Coker(i),t)$ is the cokernel of $i$ and $(\beta,\alpha)$ is the pushout of $(j,t)$.  

There are two cases which may happen:
\begin{itemize}
  \item[(i)] If $N=0$, then we have an epimorphism:
  $$P(L')\twoheadrightarrow \Coker(i) \oplus L',$$
  which follows another epimorphism: $$P(L')=P(P(L'))\twoheadrightarrow P(\Coker(i)) \oplus P(L')$$
  where $P(P(L'))$ and $P(\Coker(i))$ denote projective covers of $P(L')$ and $\Coker(i)$ respectively.
  Thus, $\Coker(i)=0$ and consequently $P(L')$ is a direct summand of $\Phi(X)$, which contradicts to the fact that $\Phi(X)$ is indecomposable and non-projective.
  
  \item[(ii)] If $N\neq 0$,
  since $\alpha\circ f=\alpha\circ j\circ i=\beta\circ t\circ i=0$, we find $\alpha=0$. This leads to a contradiction that $N=\Im(\alpha)$.
\end{itemize}

In conclusion, $\underline{f}\neq 0$ and thus $\underline{\Hom}_{\C'}(\Phi(X),L')\neq0$. 

``If\," part: Conversely, $\underline{\Hom}_{\C'}(\Phi(X),L')\neq0$ makes $\Hom_{\C}(\Phi(X),L')\neq0$ which deduces that $L'$ is a quotient object of $\Phi(X)$. 
\item[(2)] The proof of this result is dual to that given above by using pullback instead and so is omitted.
\end{itemize}
\end{proof}

\begin{corollary}\label{iff}
Let $\C$ and $\C'$ be non-semisimple  finite Frobenius $\k\-$linear abelian categories having no projective simple objects. Suppose a $\k\-$linear functor $F:\C\rightarrow\C'$ induces a stable equivalence between $\C$ and $\C'$. For two simple objects $L\in\C$ and $L'\in\C'$, $L$ is a subobject of $\Psi(L')$ if and only if $L'$ is a quotient object of $\Phi(L)$.
\end{corollary}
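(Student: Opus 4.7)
The plan is to use Lemma \ref{simple} to translate each of the two geometric conditions (being a subobject, being a quotient) into a non-vanishing condition in the relevant stable Hom space, and then invoke the fact that $\underline{F}$ is an equivalence to show that these two non-vanishing conditions coincide.

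First I would note that the hypothesis ``no projective simple objects'' ensures that $L$ and $L'$ are both non-projective indecomposable, so that $\Phi(L)$ and $\Psi(L')$ are well-defined by Lemma \ref{ind}. Applying Lemma \ref{simple}(1) to the indecomposable non-projective object $L \in \C$ and the simple object $L' \in \C'$, the condition that $L'$ is a quotient object of $\Phi(L)$ is equivalent to $\underline{\Hom}_{\C'}(\Phi(L), L') \neq 0$. Dually, by Lemma \ref{simple}(2), the condition that $L$ is a subobject of $\Psi(L')$ is equivalent to $\underline{\Hom}_{\C}(L, \Psi(L')) \neq 0$. So the corollary reduces to the claim
\[
\underline{\Hom}_{\C}(L, \Psi(L')) \neq 0 \iff \underline{\Hom}_{\C'}(\Phi(L), L') \neq 0.
\]

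To verify this, I would use the definition of $\Phi$ and $\Psi$ recorded after Lemma \ref{ind}: $F(L) \cong \Phi(L) \oplus P'$ for some projective $P' \in \C'$, and $F(\Psi(L')) \cong L' \oplus Q'$ for some projective $Q' \in \C'$. In the stable category $\underline{\C'}$ the projective summands vanish, so $\underline{F}(L) \cong \Phi(L)$ and $\underline{F}(\Psi(L')) \cong L'$ in $\underline{\C'}$. Since $\underline{F}$ is a $\k$-linear equivalence, it induces an isomorphism
\[
\underline{\Hom}_{\C}(L, \Psi(L')) \cong \underline{\Hom}_{\C'}\bigl(\underline{F}(L), \underline{F}(\Psi(L'))\bigr) \cong \underline{\Hom}_{\C'}(\Phi(L), L'),
\]
and the two groups are non-zero simultaneously, as required.

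There is no serious obstacle: the argument is essentially bookkeeping once Lemma \ref{simple} is in place. The only point one should be a little careful about is the identification $\underline{F}(L) \cong \Phi(L)$ in $\underline{\C'}$, which relies on projective direct summands being zero in the stable category; this in turn uses the Frobenius hypothesis (projective equals injective) so that the indecomposable summands of $F(L)$ and $F(\Psi(L'))$ decompose cleanly into a non-projective part plus a projective part via Krull--Schmidt. Given that, the corollary follows immediately.
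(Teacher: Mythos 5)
Your proposal is correct and follows essentially the same route as the paper: both translate the subobject/quotient conditions via Lemma \ref{simple} and then use the chain of isomorphisms $\underline{\Hom}_{\C}(L,\Psi(L'))\cong\underline{\Hom}_{\C'}(\Phi(L),\Phi(\Psi(L')))\cong\underline{\Hom}_{\C'}(\Phi(L),L')$ coming from the stable equivalence. Your added remark justifying $\underline{F}(L)\cong\Phi(L)$ in the stable category via Krull--Schmidt is a harmless elaboration of what the paper leaves implicit.
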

\begin{proof}
Since $F$ induces a stable equivalence, we have $$\underline{\Hom}_{\C}(L,\Psi(L'))\cong
\underline{\Hom}_{\C'}(F(L),F(\Psi(L')))\cong\underline{\Hom}_{\C'}(\Phi(L),\Phi(\Psi(L')))\cong\underline{\Hom}_{\C'}(\Phi(L),L').$$
Therefore $\underline{\Hom}_{\C}(L,\Psi(L'))\neq0$ if and only if $\underline{\Hom}_{\C'}(\Phi(L),L')\neq0.$ The conclusion is obtained by Lemma \ref{simple}.
\end{proof}
Let $\C$ and $\C'$ be non-semisimple  finite Frobenius $\k\-$linear abelian categories having no projective simple objects. Let $\{L_i\}_{i\in I}$ and $\{L'_j\}_{j\in J}$ be the isoclasses of simple objects in $\C$ and $\C'$ respectively. We introduce the following notation 
%$$\mathcal{O}_j=\{L\in\mathcal{O}(\C)\;|\;L\;\text{is a subobject of}\;\Psi(L_j')\}\;\;(j\in J),$$
$$J_i=\{j\in J\;|\;L_j'\;\text{is a quotient object of}\;\Phi(L_i)\}\;\;\;\;(i\in I).$$

\begin{corollary}\label{important}
Let $\C$ and $\C'$ be non-semisimple  finite Frobenius $\k\-$linear abelian categories having no projective simple objects. Suppose a $\k\-$linear functor $F:\C\rightarrow\C'$ induces a stable equivalence between $\C$ and $\C'$. Then %$\mathcal{O}(\C')=\underset{i\in I}{\bigcup}\mathcal{O}'_i$.
%and $\mathcal{O}(\C)=\underset{j\in J}{\bigcup}\mathcal{O}_j.$
$J=\underset{i\in I}{\bigcup}J_i$.
%and $\mathcal{O}(\C)=\underset{j\in J}{\bigcup}\mathcal{O}_j.
\end{corollary}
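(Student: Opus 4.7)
The plan is to show that every simple object $L'_j \in \C'$ appears as a quotient of $\Phi(L_i)$ for some $i$. Since the inclusion $\bigcup_{i \in I} J_i \subseteq J$ is trivial, only the reverse inclusion requires work. The key idea is to transport the question to $\C$ via the inverse correspondence $\Psi$ and then use the existence of socles in a finite abelian category.

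Fix $j \in J$. First I would observe that $L'_j$ is non-projective, because by hypothesis $\C'$ has no projective simple objects, and hence $\Psi(L'_j)$ is a well-defined indecomposable non-projective object in $\C$ by Lemma \ref{ind}. In particular $\Psi(L'_j) \ne 0$. Since $\C$ is a finite $\k$-linear abelian category, every non-zero object has a non-zero socle, so there exists a simple object $L_i \in \C$ and a monomorphism $L_i \hookrightarrow \Psi(L'_j)$.

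Now I would invoke Corollary \ref{iff}: the condition that $L_i$ is a subobject of $\Psi(L'_j)$ is equivalent to $L'_j$ being a quotient object of $\Phi(L_i)$. Thus $j \in J_i$, which completes the required inclusion $J \subseteq \bigcup_{i \in I} J_i$.

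There is essentially no obstacle here, since all the real content has already been absorbed into Corollary \ref{iff}; the only remaining points to verify carefully are that $\Psi(L'_j)$ is defined (using the hypothesis that $\C'$ has no projective simples, so $L'_j$ is non-projective) and that every non-zero object in a finite $\k$-linear abelian category has a simple subobject, which is standard.
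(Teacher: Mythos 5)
Your proof is correct and follows essentially the same route as the paper: reduce to the inclusion $J\subseteq\bigcup_{i\in I}J_i$, pick a simple subobject $L_i$ of $\Psi(L'_j)$, and apply Corollary \ref{iff}. You are in fact slightly more careful than the paper, which tacitly assumes the existence of such an $L_i$; your explicit justification (non-projectivity of $L'_j$ guarantees $\Psi(L'_j)$ is defined and non-zero, hence has a simple subobject) is a worthwhile addition rather than a deviation.
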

\begin{proof}
It is suffices to prove %$\mathcal{O}(\C')\subset\underset{i\in I}{\bigcup}\mathcal{O}'_i.$  
$J\subset\underset{i\in I}{\bigcup}J_i.$
Indeed, let $L_j'$ be a simple object in $\C'$ and suppose that $L_i$ is a simple subobject of $\Psi(L_j')$. Therefore $L_j'$ is a simple quotient object of $\Phi(L_i)$ by Corollary \ref{iff}. In other words, $j\in J_i$ for some $i\in I$.
\end{proof}

With the help of the preceding lemmas, we can now prove the following result.

\textbf{Proof of Theorem \ref{result2}.}
Let $\{L_i\}_{i\in I}$ and  $\{L'_j\}_{j\in J}$ be the set of
isoclasses of simple objects in $\C$ and $\C'$ respectively, where $I$ and $J$ are finite sets. Moreover, we use $P_i$ (resp. $P'_j$) to represent a projective cover of each simple object $L_i$ (resp. $L_j'$).   

The trick of the proof is to show $F$ maps simple objects to simple objects. For any simple object $L_i$, we have $F(L_i)\cong\Phi(L_i)\oplus Q'_i$ for some projective object $Q'_i$ by Lemma \ref{ind}. In addition, as $F$ is an exact functor, there is an epimorphism $F(P_i)\twoheadrightarrow P(\Phi(L_i))$ for any $i\in I$, where $P(\Phi(L_i))$ denotes a projective cover of $\Phi(L_i)$. 

Consequently, we can get the following formula:
$$
\begin{aligned}
\FPdim(\C)&=\sum_{i\in I}\FPdim(L_i)\FPdim(P_i)=\sum_{i\in I}\FPdim(F(L_i))\FPdim(F(P_i))\\
&=\sum_{i\in I}\FPdim(\Phi(L_i)\oplus Q'_i)\FPdim(F(P_i))\\
%&\;\;\;\;+\sum_{i\in I\backslash\phi(J)}\FPdim(F(L_i))\FPdim(F(P_i))\\
&\geq\sum_{i\in I}\FPdim(\Phi(L_i))\FPdim(P(\Phi(L_i)))\\
&\geq\sum_{i\in I}(\sum_{j\in J_i}\FPdim(L_j'))(\sum_{j\in J_i}\FPdim(P'_j)))\\
&\geq\sum_{j\in J}\FPdim(L_j')\FPdim(P_j')\;\;\;\; \text{(by Corollary \ref{important})}\\
&=\FPdim(\C').\\
\end{aligned}
$$
By the condition that $\FPdim(\C)=\FPdim(\C')$, all the $``\geq"$ above are in fact equalities.
Due to $$\sum_{i\in I}\FPdim(\Phi(L_i)\oplus Q'_i)\FPdim(F(P_i))=\sum_{i\in I}\FPdim(\Phi(L_i))\FPdim(P(\Phi(L_i))),$$
we can deduce that $Q_i'=0$ for any $i\in I.$ Moreover, by $$\sum_{i\in I}(\sum_{j\in J_i}\FPdim(L'_j))(\sum_{j\in J_i}\FPdim(P'_j))\\
=\sum_{j\in J}\FPdim(L_j')\FPdim(P_j'),$$
it is clear that each $J_i$ has just one element for $i\in I.$ 
Without loss of generality, let $J_i=\{L'_{\varphi(i)}\}$ where $\varphi:I\rightarrow J$ is a surjection given by Corollary \ref{important}. At last,  
$$
\begin{aligned}
\sum_{i\in I}\FPdim(L'_{\varphi(i)})\FPdim(P'_{\varphi(i)})&=\sum_{j\in J}\FPdim(L_j')\FPdim(P_j')\\
&=\sum_{i\in I}\FPdim(\Phi(L_i))\FPdim(P(\Phi(L_i))),
\end{aligned}
$$ 
it follows that $\FPdim(\Phi(L_i))=\FPdim(L'_{\varphi(i)})$ for $i\in I.$ Hence $F(L_i)\cong \Phi(L_i)\cong L'_{\varphi(i)}$ for any $i\in I$.

Since $\C$ and $\C'$ are finite, using the same method used in proof of Proposition \ref{result1}, we can assume $\C\cong A\-\mod$, $\C'\cong A'\-\mod$ as $\k$-linear abelian categories, where $A$ and $A'$ are finite-dimensional self-injective $\k$-algebras. By Lemma \ref{lem:quiv}, $F$ is a $\k$-linear equivalence. Consequently it is a tensor equivalence.
\qed

It is direct to see the following corollary. 
\begin{corollary}
Let $H$ and $H'$ be finite-dimensional non-semisimple Hopf algebras having no simple projective modules such that $\dim_{\k}(H)=\dim_{\k}(H')$. If an exact $\k\-$linear monoidal functor $F:H\-\mod\rightarrow H'\-\mod$ induces a stable equivalence $\underline{F}:H\-\underline{\mod}\rightarrow H'\-\underline{\mod}$, then $H$ and $H'$ are gauge equivalent.
\end{corollary}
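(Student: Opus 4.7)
The plan is to derive the corollary directly from Theorem~\ref{result2} together with Lemma~\ref{tensor}. Setting $\C := H\-\mod$ and $\C' := H'\-\mod$, both are finite tensor categories because $H$ and $H'$ are finite-dimensional Hopf algebras; they are non-semisimple by hypothesis; and they have no projective simple objects, which is precisely the stated assumption that $H\-\mod$ and $H'\-\mod$ have no simple projective modules. Moreover, the given functor $F$ is exact, $\k\-$linear and monoidal, and induces a stable equivalence, matching the functorial hypotheses of Theorem~\ref{result2}.

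The only genuine verification needed is the Frobenius--Perron dimension hypothesis $\FPdim(\C)=\FPdim(\C')$ of Theorem~\ref{result2}. For this I would invoke the identification $\FPdim(H\-\mod)=\dim_{\k}(H)$ (and the analogous equality for $H'$) recalled in the paper immediately after the definition of $\FPdim(\C)$, cf.\ \cite[Example 6.1.9]{EGNO15}. Combined with the assumption $\dim_{\k}(H)=\dim_{\k}(H')$, this gives $\FPdim(\C)=\FPdim(\C')$ with no further work.

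Having matched all the hypotheses of Theorem~\ref{result2}, I conclude that $F:H\-\mod \to H'\-\mod$ is a tensor equivalence. Applying Lemma~\ref{tensor} then yields that $H$ and $H'$ are gauge equivalent as Hopf algebras, which is the desired conclusion. Since the corollary is explicitly flagged by the authors as a direct consequence, there is no substantive obstacle; the proof is essentially a translation of the Hopf-algebraic hypotheses on $H,H'$ into the categorical hypotheses of Theorem~\ref{result2}, followed by a single invocation of Lemma~\ref{tensor}.
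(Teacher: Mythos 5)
Your proposal is correct and matches the paper's own (one-line) proof exactly: the paper also reduces to Theorem~\ref{result2} via the identity $\FPdim(H\-\mod)=\dim_{\k}(H)$ and then concludes gauge equivalence through Lemma~\ref{tensor}. You have simply spelled out the hypothesis-checking that the authors leave implicit.
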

\begin{proof}
By $\FPdim(H\-\mod)=\dim_{\k}(H)$ we can get the conclusion.
\end{proof}

\begin{remark}
\emph{We end this section by pointing out that: The condition ``the functor $F$ is monoidal'' can not be removed in Theorem \ref{result2}. Let us illustrate it with an example.} 
\emph{Consider the $n^2$-dimensional Taft algebras $T_{n^2}(\omega_1)$ and $T_{n^2}(\omega_2)$, where $\omega_1$ and $\omega_2$ are primitive $n$-th roots of unity. \cite[Corollary 3.3]{KMN12} tells us that $T_{n^2}(\omega_1)$ and $T_{n^2}(\omega_2)$ are gauge equivalent if and only if $\omega_1=\omega_2$. As the fact that $T_{n^2}(\omega_1)$ and $T_{n^2}(\omega_2)$ are isomorphic as algebras, they are Morita equivalent inducing a functor from $T_{n^2}(\omega_1)\-\mod$ to $T_{n^2}(\omega_2)\-\mod$. This functor satisfies the assumptions of Theorem \ref{result2}, except that $F$ is a monoidal functor when $\omega_1\neq\omega_2$.}
\end{remark}

\section*{Acknowledgment}
The authors would like to thank the referee for his/her detailed and valuable comments which improve the paper greatly.


\begin{thebibliography}{99}
\setlength{\itemsep}{0em}
\bibitem{AEG01} N. Andruskiewitsch, P. Etingof and S. Gelaki, \textit{Triangular Hopf algebras with the Chevalley property}. Michigan Math. J. 49 (2001), no. 2, 277-298.
\bibitem{Aus69} M. Auslander, \textit{Comments on the functor Ext.} Topology 8 (1969), 151–166. 
18.20.
\bibitem{ARS95} M. Auslander, I. Reiten and S.O. Smalø, \textit{Representation Theory of Artin Algebras}. Cambridge Studies in Advanced Mathematics, 36. Cambridge University Press, Cambridge, 1995. xiv+423 pp.
\bibitem{Bal10} P. Balmer, \textit{Tensor triangular geometry}. Proceedings of the International Congress of Mathematicians. Volume II, 85-112, Hindustan Book Agency, New Delhi, 2010.
\bibitem{Bal05} P. Balmer, \textit{The spectrum of prime ideals in tensor triangulated categories}. J. Reine Angew. Math. 588 (2005), 149-168.
\bibitem{Bro94} M. Broué, \textit{Equivalences of blocks of group algebras}. Finite-dimensional algebras and related topics (Ottawa, ON, 1992), 1–26, NATO Adv. Sci. Inst. Ser. C: Math. Phys. Sci., 424, Kluwer Acad. Publ., Dordrecht, 1994.

\bibitem{EO04} P. Etingof and V. Ostrik,
\textit{Finite tensor categories}. Mosc. Math. J. 4 (2004), no. 3, 627-654, 782-783. 



\bibitem{EGNO15} P. Etingof, S. Gelaki, D. Nikshych and V. Ostrik, \textit{Tensor Categories}. Mathematical Surveys and Monographs, 205. American Mathematical Society, Providence, RI, 2015. xvi+343 pp.
\bibitem{Hap88} D. Happel, \textit{Triangulated Categories in the Representation Theory of Finite-Dimensional Algebras}. London Mathematical Society Lecture Note Series, 119. Cambridge University Press, Cambridge, 1988. x+208
\bibitem{KMN12} Y. Kashina, S. Montgomery and S.-H. Ng, \textit{On the trace of the antipode and higher indicators}. Israel J. Math. 188 (2012), 57-89.
\bibitem{KL94} D. Kazhdan and G. Lusztig, \textit{Tensor structures arising from affine Lie algebras}. IV, J. Amer. Math. Soc. 7 (1994), no. 2, 383-453. 
\bibitem{Lin96} M. Linckelmann, \textit{Stable equivalences of Morita type for self-injective algebras and $p$-groups}. Math. Z. 223 (1996), no. 1, 87-100.
\bibitem{Mac98} S. Mac Lane, \textit{Categories for the Working Mathematician.} Second edition. Graduate Texts in Mathematics, 5. Springer-Verlag, New York, 1998. xii+314 pp.

\bibitem{Mon93} S. Montgomery, \textit{Hopf algebras and their actions on rings}. CBMS Regional Conference Series in Mathematics, 82. Published for the Conference Board of the Mathematical Sciences, Washington, DC; by the American Mathematical Society, Providence, RI, 1993. xiv+238 pp.
\bibitem{NVY19} D.K. Nakano, K.B. Vashaw and M.T. Yakimov, \textit{Noncommutative tensor triangular geometry}. preprint arXiv:1909.04304v2
\bibitem{NVY22} D.K. Nakano, K.B. Vashaw and M.T. Yakimov, \textit{On the spectrum and support theory of a finite tensor category}. preprint arXiv:2112.11170
\bibitem{NS08} S.-H. Ng and P. Schauenburg, \textit{Central invariants and higher indicators for semisimple quasi-Hopf algebras}. Trans. Amer. Math. Soc. 360 (2008), no. 4, 1839-1860.



\end{thebibliography}
\end{document}